 \newtheorem{thm}{Theorem}[section]
 \theoremstyle{definition}
 \newtheorem{defn}[thm]{Definition}
 \theoremstyle{remark}
 \newtheorem{rem}[thm]{Remark}
 \newtheorem*{ex}{Example}
 \numberwithin{equation}{section}
\newcommand{\PSI}{\Psi}
\newcommand{\cS}{\mathbf{S}}
\newcommand{\ETA}[1]{\bigl(1+#1\bigr)^{-1}}
\newcommand{\EQU}{\mathcal{E}}
\newcommand{\al}{\alpha}
\newcommand{\e}{\varepsilon}
\newcommand{\de}{\delta}
\newcommand{\De}{\Delta}
\newcommand{\N}{\mathbb{N}}
\newcommand{\Z}{\mathbb{Z}}
\newcommand{\R}{\mathbb{R}}
\newcommand{\set}[1]{\{#1\}}
\newcommand{\bigprn}[1]{\bigl(#1\bigr)}
\newcommand{\Bigprn}[1]{\Bigl(#1\Bigr)}
\begin{document}

\title[A fixed point theorem for nonlinear contractions]
 {A Suzuki-type fixed point theorem\\ for nonlinear contractions}

\author[M. Abtahi]{Mortaza Abtahi}

\address{School of Mathematics and Computer Sciences\\
Damghan University, Damghan, \\
P.O.BOX 36715-364, Iran}

\email{abtahi@du.ac.ir}

\subjclass{Primary 54H25; Secondary 54E50}

\keywords{Banach contraction principle, Contractive mappings, Fixed points, Metric completeness}

\date{\today}

\begin{abstract}
  We introduce the notion of admissible functions and show
  that the family of L-functions introduced by Lim in
  [Nonlinear Anal. 46(2001), 113--120] and the family
  of test functions introduced by Geraghty in
  [Proc. Amer. Math. Soc., 40(1973), 604--608] are
  admissible. Then we prove that if $\phi$ is an admissible
  function, $(X,d)$ is a complete metric space, and $T$ is a mapping
  on $X$ such that, for $\al(s)=\phi(s)/s$, the condition
  $\ETA{\al(d(x,Tx))} d(x,Tx) < d(x,y)$ implies
  $d(Tx,Ty) < \phi(d(x,y))$, for all $x,y\in X$, then
  $T$ has a unique fixed point. We also show that our fixed point
  theorem characterizes the metric completeness of $X$.
\end{abstract}

\maketitle

\section{Introduction}

Throughout this paper, we denote by $\N$ the set of positive integers,
by $\Z^+$ the set of nonnegative integers, and
by $\R^+$ the set of nonnegative real numbers.
Given a set $X$ and a mapping $T:X\to X$, the $n$th iterate of
$T$ is denoted by $T^n$ so that $T^2x=T(Tx)$, $T^3x=T(T^2x)$ and so on.
A point $x\in X$ is called a \emph{fixed point} of $T$ if $Tx=x$.

Let $(X,d\,)$ be a metric space. A mapping $T:X\to X$ is called  a \emph{contraction}
if there is $r\in[0,1)$ such that $d(Tx,Ty)\leq rd(x,y)$, for all $x,y\in X$.
The following famous theorem is referred to as the Banach contraction
principle.

\begin{thm}[Banach \cite{Banach}]
\label{thm:Banach}
  If $(X, d\,)$ is a complete metric space, then every contraction $T$ on $X$
  has a unique fixed point.
\end{thm}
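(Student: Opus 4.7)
The plan is to use Picard iteration in the standard way. Starting from an arbitrary $x_0\in X$, I define the orbit $x_n=T^nx_0$ and aim to prove (a) this sequence is Cauchy, (b) its limit is a fixed point, and (c) no other fixed point exists.

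For the Cauchy property, the contraction inequality, applied inductively, gives $d(x_{n+1},x_n)\leq r^n d(x_1,x_0)$. Combining this with the triangle inequality and summing a geometric series yields a uniform estimate of the form $d(x_{n+m},x_n)\leq r^n(1-r)^{-1}d(x_1,x_0)$, whose right-hand side tends to $0$ as $n\to\infty$ because $r\in[0,1)$. Completeness of $(X,d\,)$ then delivers a limit $x^*\in X$.

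To identify $x^*$ as a fixed point I would use that every contraction is Lipschitz, hence continuous, so passing to the limit in the identity $x_{n+1}=Tx_n$ gives $Tx^*=x^*$. For uniqueness, if $Tx=x$ and $Ty=y$ then $d(x,y)=d(Tx,Ty)\leq r\,d(x,y)$, and since $r<1$ this forces $d(x,y)=0$.

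There is no real obstacle here, since this is the classical Banach argument; the only point requiring mild care is organizing the telescoping estimate so that the bound on $d(x_{n+m},x_n)$ is uniform in the gap $m$, which is what allows the Cauchy conclusion.
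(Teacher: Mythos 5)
Your proof is correct and is precisely the classical Picard iteration argument; the paper states this theorem without proof, citing Banach, so there is nothing to compare against beyond noting that your telescoping geometric-series estimate, continuity argument, and uniqueness step are all the standard ones and are carried out correctly.
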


The Banach fixed point theorem is very simple and powerful. It became
a classical tool in nonlinear analysis with many generalizations;
see
\cite{Boyd-Wong,
Caristi-1976,
Ciric-1974,
Ekeland-1974,
Kirk-2003,
Meir-Keeler,
Nadler-1969,
Subrahmanyam-1974,
Suzuki-2001,
Suzuki-2004,
Suzuki-2005,
Suzuki-2008,
Suzuki-NA-2009}.
For instance, the following result due to Boyd and Wong \cite{Boyd-Wong}
is a great generalization of Theorem \ref{thm:Banach}.

\begin{thm}[{Boyd and Wong \cite{Boyd-Wong}}]
\label{thm:Boyd-Wong}
  Let $(X,d\,)$ be a complete metric space, and let $T$
  be a mapping on $X$. Assume there exists a function $\phi:\R^+\to\R^+$
  which is upper semi-continuous from the right, $\phi(s)<s$
  for $s>0$, and
  \begin{equation}\label{eqn:Boyd-Wong}
    \forall\, x,y\in X,\quad d(Tx,Ty) \leq \phi(d(x,y)).
  \end{equation}
  Then $T$ has a unique fixed point.
\end{thm}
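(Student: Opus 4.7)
The plan is to construct the fixed point as the limit of the Picard iterates. Uniqueness is immediate: if $x\neq y$ were distinct fixed points, then $d(x,y)=d(Tx,Ty)\leq\phi(d(x,y))<d(x,y)$, a contradiction. So fix $x_0\in X$, let $x_n=T^n x_0$, and write $c_n=d(x_n,x_{n+1})$. The contractive inequality forces $c_{n+1}\leq\phi(c_n)<c_n$ whenever $c_n>0$, so $(c_n)$ decreases to some $c\geq 0$; if $c>0$, then taking $\limsup$ in $c_{n+1}\leq\phi(c_n)$ and using upper semi-continuity of $\phi$ from the right at $c$ gives $c\leq\phi(c)<c$, a contradiction. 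Hence $c_n\to 0$.

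The main obstacle, and the step where right semi-continuity is really needed, is to show that $(x_n)$ is Cauchy. I would argue by contradiction: if $(x_n)$ is not Cauchy, there exist $\e>0$ and indices $m_k<n_k$ with $d(x_{m_k},x_{n_k})\geq\e$, and choosing $n_k$ minimal with this property gives $d(x_{m_k},x_{n_k-1})<\e$. The triangle inequality then sandwiches
\[
\e\leq d(x_{m_k},x_{n_k})\leq d(x_{m_k},x_{n_k-1})+c_{n_k-1}<\e+c_{n_k-1},
\]
forcing $d(x_{m_k},x_{n_k})\to\e$ from above. Applying the triangle inequality once more together with the contractive bound yields
\[
d(x_{m_k},x_{n_k})\leq c_{m_k}+\phi(d(x_{m_k},x_{n_k}))+c_{n_k}.
\]
Because the arguments of $\phi$ descend to $\e$, right semi-continuity gives $\limsup_k\phi(d(x_{m_k},x_{n_k}))\leq\phi(\e)$, and passing to the limit produces $\e\leq\phi(\e)<\e$, the desired contradiction.

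Once $(x_n)$ is Cauchy, completeness of $X$ supplies a limit $z$, and the contractive bound $d(Tx_n,Tz)\leq\phi(d(x_n,z))$ forces $Tx_n\to Tz$; comparing with $x_{n+1}=Tx_n\to z$ gives $Tz=z$. The delicate ingredient throughout is that in both the diagonal step and the Cauchy step the arguments of $\phi$ approach their limit from above, which is exactly the side on which upper semi-continuity is assumed.
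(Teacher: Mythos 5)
The paper does not prove this statement; it is quoted as a known result from Boyd and Wong, so there is no internal proof to compare against. Your argument is correct and is essentially the original Boyd--Wong proof: monotone decrease of $c_n=d(x_n,x_{n+1})$ to $0$ via right upper semi-continuity, the minimal-index contradiction argument for the Cauchy property (where the arguments of $\phi$ approach $\e$ from above, exactly where the semi-continuity hypothesis applies), and passage to the limit using $\phi(s)\leq s$; the only points worth making explicit are that $c_n=0$ for some $n$ already yields a fixed point, and that for large $k$ one has $n_k\geq m_k+2$ so that $d(x_{m_k},x_{n_k-1})<\e$ is meaningful.
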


Another interesting generalization of Banach contraction principle was
given by Meir and Keeler \cite{Meir-Keeler} as follows.

\begin{thm}[Meir and Keeler \cite{Meir-Keeler}]
  Let $(X,d)$ be a complete metric space and let $T$ be a Meir-Keeler contraction
  on $X$, i.e., for every $\e>0$, there exists $\de>0$ such that
  \begin{equation}
    \forall\, x,y\in X\
    (\e\leq d(x,y) < \e+\de\ \Longrightarrow\
    d(Tx,Ty)<\e).
  \end{equation}
  Then $T$ has a unique fixed point.
\end{thm}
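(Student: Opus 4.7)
The plan is to run the Picard iteration $x_{n+1}=Tx_n$ from an arbitrary starting point and to use the Meir--Keeler condition to force the orbit to be Cauchy, after which completeness plus continuity of $T$ closes the argument.

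First I would record the crucial consequence that $d(Tx,Ty)<d(x,y)$ whenever $x\neq y$: apply the hypothesis with $\e=d(x,y)$ to obtain a $\de>0$, and note $d(x,y)\in[\e,\e+\de)$. Uniqueness is then immediate, since two distinct fixed points would give $d(x,y)=d(Tx,Ty)<d(x,y)$. Next, assume (else we are done) that $c_n=d(x_n,x_{n+1})>0$ for all $n$; the observation above shows $(c_n)$ is strictly decreasing, so $c_n\downarrow\ell$ for some $\ell\geq0$. If $\ell>0$, pick the $\de$ supplied by Meir--Keeler at $\e=\ell$; for $n$ large enough $\ell\leq c_n<\ell+\de$, hence $c_{n+1}<\ell$, contradicting $c_n\geq\ell$. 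Thus $c_n\to0$.

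The main obstacle is showing that $(x_n)$ itself is Cauchy. Suppose not: choose $\e>0$ so that for every $N$ there exist $m,n\geq N$ with $d(x_m,x_n)\geq2\e$, and let $\de\in(0,\e)$ be supplied by Meir--Keeler at this $\e$. Pick $N$ so that $c_n<\de/4$ for $n\geq N$. Fix such $m\geq N$ together with some $q>m$ satisfying $d(x_m,x_q)\geq\e+\de$, and let $p$ be the first index $k>m$ with $d(x_m,x_k)\geq\e+\de/2$; the small increments $c_{k-1}<\de/4$ guarantee
\begin{equation*}
  \e+\tfrac{\de}{2}\leq d(x_m,x_p)<\e+\de.
\end{equation*}
Applying Meir--Keeler to the pair $(x_m,x_p)$ yields $d(x_{m+1},x_{p+1})<\e$, and then the triangle inequality gives
\begin{equation*}
  d(x_m,x_p)\leq c_m+d(x_{m+1},x_{p+1})+c_p<\tfrac{\de}{4}+\e+\tfrac{\de}{4}=\e+\tfrac{\de}{2},
\end{equation*}
contradicting the lower bound. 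Hence $(x_n)$ is Cauchy.

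By completeness, $x_n\to x$ for some $x\in X$. The strict-contractivity observation makes $T$ nonexpansive and therefore continuous, so $Tx=\lim Tx_n=\lim x_{n+1}=x$, giving the unique fixed point. The delicate part of the whole argument is the index selection in the Cauchy step: the bound $c_k<\de/4$ must be exploited twice, once to make the walk $k\mapsto d(x_m,x_k)$ land in the window $[\e+\de/2,\e+\de)$, and once to close the triangle inequality with a definite gap of $\de/2$.
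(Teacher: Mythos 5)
Your proof is correct. Note that the paper does not actually prove this theorem: it is quoted from Meir--Keeler \cite{Meir-Keeler} as background. Each step of your argument checks out: the derivation of $d(Tx,Ty)<d(x,y)$ for $x\neq y$ by taking $\e=d(x,y)$, the monotone-limit argument forcing $c_n\to0$, and the Cauchy step. In the Cauchy step the index selection is sound: since $d(x_m,x_{m+1})=c_m<\de/4<\e+\de/2$, the first crossing index $p$ satisfies $p\geq m+2$, so minimality gives $d(x_m,x_{p-1})<\e+\de/2$ and hence $d(x_m,x_p)<\e+\de/2+\de/4<\e+\de$, which puts $d(x_m,x_p)$ in the window where the Meir--Keeler hypothesis applies; the closing triangle inequality then produces the strict gap $\de/2$ as you claim. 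It is worth pointing out that the paper's own machinery suggests a different route to the same theorem: by Lim's characterization, a Meir--Keeler contraction satisfies $d(Tx,Ty)<\phi(d(x,y))$ for an L-function $\phi$, and the proof of Theorem \ref{thm:L-function is admissible} then establishes that orbits are Cauchy by an induction on the gap $m$, showing $d(x_n,x_{n+m})<\de+s$ for all $m$ (Suzuki's method). Your direct contradiction argument with a ``first threshold-crossing index'' is the classical one; it avoids the detour through L-functions but is tied to the specific $\e$--$\de$ form of the hypothesis, whereas the paper's inductive scheme is the one that generalizes to the admissibility framework developed in Section 2.
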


Lim \cite{Lim} introduced the notion of L-functions and proved a characterization
of Meir-Keeler contractions that shows how much more general is Meir-Keeler's
result than Boyd-Wong's.
A function $\phi:[0,\infty)\to[0,\infty)$ is called an \emph{L-function} if
$\phi(0)=0$, $\phi(s)>0$ for $s>0$, and for every $s>0$ there exists
$\de>0$ such that $\phi(t)\leq s$ for all $t\in [s, s + \de]$.

\begin{thm}[Lim \cite{Lim}, see also \cite{Suzuki-MK-Lim}]
  Let $(X, d)$ be a metric space and let $T$ be a mapping on $X$.
  Then $T$ is a Meir-Keeler contraction if and only if there exists an
  L-function $\phi$ such that
  \[
    \forall\, x,y\in X,\quad d(Tx,Ty) < \phi(d(x,y)).
  \]
\end{thm}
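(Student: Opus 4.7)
My plan is to prove the two directions separately.

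For the direction that an L-function bound implies Meir--Keeler, the argument is direct. Given $\e>0$, I would apply the defining property of the L-function at $s=\e$ to obtain $\de>0$ such that $\phi(t)\le\e$ for every $t\in[\e,\e+\de]$. Then for any $x,y$ with $\e\le d(x,y)<\e+\de$, the hypothesis and the choice of $\de$ give $d(Tx,Ty)<\phi(d(x,y))\le\e$, which is exactly the Meir--Keeler condition.

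For the converse, I would first observe that the Meir--Keeler hypothesis already forces $d(Tx,Ty)<d(x,y)$ whenever $x\ne y$: simply apply it with $\e:=d(x,y)$. The task is then to synthesize a single L-function $\phi$ from the family of Meir--Keeler deltas $\set{\de(\e)}_{\e>0}$. A natural candidate is
\[
  \phi(s)=\sup \set{d(Tx,Ty):x,y\in X,\ d(x,y)\le s},
\]
for which $\phi(0)=0$ and $\phi(s)\le s$. To establish the local L-function property at a given $s>0$, I would combine the fact that $d(Tx,Ty)<s$ on pairs with $d(x,y)\le s$ (from the strict contraction just noted) with the Meir--Keeler estimate $d(Tx,Ty)<s$ on pairs with $s\le d(x,y)<s+\de(s)$, concluding $\phi(t)\le s$ for all $t\in[s,s+\de(s)/2]$.

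Two issues remain, the second being the main obstacle. First, $\phi(s)$ might vanish for some $s>0$ (for instance if $T$ is locally constant), so I would replace $\phi$ by $\tilde\phi(s)=\max\set{\phi(s),\eta(s)}$ with $\eta$ a positive, sufficiently small, nondecreasing function chosen to preserve the right-neighborhood bound $\tilde\phi(t)\le s$. Second, converting the non-strict inequality $d(Tx,Ty)\le\phi(d(x,y))$ inherent in any supremum definition into the strict inequality demanded by the theorem is delicate. I plan to handle this by exploiting, for each fixed pair $(x,y)$, the strict slack $d(Tx,Ty)<d(x,y)$ produced by Meir--Keeler at $\e=d(x,y)$: one can either enlarge the threshold in the supremum slightly, or perform a countable piecewise construction on intervals determined by a decreasing null sequence $\e_n\downarrow 0$ and their associated $\de(\e_n)$, thereby absorbing a positive gap into $\tilde\phi$. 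Once these adjustments are in place, verifying $\tilde\phi(0)=0$, positivity on $(0,\infty)$, and the L-function local bound is routine.
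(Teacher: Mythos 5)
First, note that the paper itself offers no proof of this theorem: it is quoted from Lim \cite{Lim} (see also \cite{Suzuki-MK-Lim}) as background, so there is no in-paper argument to compare yours against; I can only assess your proposal on its own terms. Your first direction is correct and complete: applying the L-function property at $s=\e$ and chaining $d(Tx,Ty)<\phi(d(x,y))\le\e$ is exactly right. The observation that Meir--Keeler forces $d(Tx,Ty)<d(x,y)$ for $x\ne y$ is also correct and is a necessary first step for the converse.

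The converse, however, is where the entire content of Lim's theorem lies, and your proposal does not close it. Neither of your two sketched repairs works as described. (a) Enlarging the threshold in the supremum can only increase $\phi$, so it still yields $d(Tx,Ty)\le\phi(d(x,y))$ with possible equality; it cannot manufacture strictness. Moreover the slack is not uniformly bounded below: one can build Meir--Keeler maps for which $\sup\set{d(Tx,Ty):d(x,y)\le s}=s$ at an \emph{interior} point $s>0$ (pairs $(x_n,y_n)$ with $d(x_n,y_n)=s-1/n$ and $d(Tx_n,Ty_n)=s-2/n$), so no additive or multiplicative bump of the supremum, and no maximum with a fixed positive $\eta$, can simultaneously preserve the bound $\tilde\phi(t)\le s$ on $[s,s+\de]$ and achieve strict domination. (b) A piecewise construction does work in principle --- on an interval $[\e',\e)$ with $\e<\e'+\de(\e')$ one may set $\phi\equiv\e'$ and get strictness from the Meir--Keeler estimate at level $\e'$ --- but this requires covering \emph{all} of $(0,\infty)$ by a chain of intervals $[\e',\e'+\de(\e'))$, not merely a neighborhood of $0$ indexed by a null sequence $\e_n\downarrow0$. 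A greedy countable chain $\e_{n+1}=\e_n+\de(\e_n)/2$ can stall at a finite limit when the increments are summable, and this is precisely the obstruction that forces Lim's original proof to use a transfinite (well-ordering) argument to continue the covering past such limit points, followed by a verification of the L-function property at the accumulation points of the resulting partition. As written, your proposal identifies the right difficulties but replaces the one genuinely hard step with ``routine,'' which it is not.
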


There is an example of an incomplete metric space $X$ on which
every contraction has a fixed point, \cite{Connell-1959}.
This means that Theorem \ref{thm:Banach}
cannot characterize the metric completeness of $X$.
Recently, Suzuki in \cite{Suzuki-2008} proved the following
remarkable generalization of the classical Banach contraction principle
that characterizes the metric completeness of $X$.

\begin{thm}[Suzuki \cite{Suzuki-2008}]
\label{thm:Suzuki-2008}
  Define a function $\theta:[0, 1) \to (1/2, 1]$ by
  \begin{equation}\label{eqn:theta}
    \theta(r)=
      \left\{
        \begin{array}{ll}
          1, & \hbox{if $0\leq r \leq (\sqrt5-1)/2$;} \\
          (1-r)r^{-2}, & \hbox{if $(\sqrt5-1)/2 \leq r \leq 1/\sqrt2$;} \\
          (1+r)^{-1}, & \hbox{if $1/\sqrt2 \leq r <1$.}
        \end{array}
      \right.
  \end{equation}
  Let $(X,d\,)$ be a metric space. Then $X$ is complete if and only if
  every mapping $T$ on $X$ satisfying the following has a fixed point:
    \begin{itemize}
      \item  There exists $r\in[0, 1)$ such that
             \begin{equation}\label{eqn:suzuki-condition}
               \forall\, x,y\in X\
               \bigprn{\theta(r) d(x,Tx) \leq d(x,y) \ \Longrightarrow \
               d(Tx,Ty) \leq r d(x,y)}.
             \end{equation}
    \end{itemize}
\end{thm}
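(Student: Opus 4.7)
My plan is to prove the two directions separately. For the \emph{sufficiency} direction, I begin with an arbitrary $x_0\in X$ and the Picard sequence $x_n=T^nx_0$. Because $\theta(r)\leq1$, the hypothesis $\theta(r)d(x_n,Tx_n)\leq d(x_n,x_{n+1})$ is automatic, so applying \eqref{eqn:suzuki-condition} with $x=x_n$ and $y=x_{n+1}$ yields $d(x_{n+1},x_{n+2})\leq r\,d(x_n,x_{n+1})$. Iterating gives the geometric bound $d(x_n,x_{n+1})\leq r^n d(x_0,x_1)$, so $\{x_n\}$ is Cauchy and completeness produces a limit $z\in X$.

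The core of the forward direction is to show $Tz=z$. The first step is to observe that for every $y\neq z$, the distance $d(x_n,y)\to d(z,y)>0$ while $\theta(r)d(x_n,Tx_n)\to0$; hence for all sufficiently large $n$ the hypothesis \eqref{eqn:suzuki-condition} applies and yields $d(x_{n+1},Ty)\leq r\,d(x_n,y)$, and passing to the limit gives $d(z,Ty)\leq r\,d(z,y)$ for every $y\neq z$. The second step assumes $Tz\neq z$ and applies this inequality inductively with $y=T^nz$ to conclude $T^nz\to z$. From here one must extract a contradiction, and this is where the piecewise definition of $\theta(r)$ becomes essential: depending on which of the three intervals $r$ lies in, one bounds $d(T^mz,T^{m+1}z)$ and $d(x_k,z)$ against $d(z,Tz)$ by different algebraic identities, tailored exactly to the three branches of $\theta$ in \eqref{eqn:theta}.

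For the \emph{necessity} direction I would argue by contraposition. Suppose $X$ is not complete, fix a Cauchy sequence $\{x_n\}$ in $X$ with no limit, and pass to a subsequence $\{u_n\}$ whose consecutive distances decay geometrically, say $d(u_n,u_{n+1})\leq r^n$. Define $T\colon X\to X$ by $Tu_n=u_{n+1}$ on this subsequence and extend to the rest of $X$ in a controlled way (for instance by sending every other point to a fixed $u_{n_0}$). Since $\{u_n\}$ has no limit in $X$, an easy diagonal check shows $T$ cannot have a fixed point, while the geometric decay combined with the fact that $\theta(r)>0$ lets one verify \eqref{eqn:suzuki-condition} for this particular $r$ by a finite number of cases.

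The main obstacle will be the case analysis in the forward direction: the exact piecewise values $1$, $(1-r)r^{-2}$, and $(1+r)^{-1}$ of $\theta(r)$ are engineered precisely so that, on each subinterval of $[0,1)$, the ratio $r$ and the multiplier $\theta(r)$ together close off the last possible escape route for $Tz\neq z$. Failing to use the sharp form of $\theta$ leaves a regime of $r$ for which the contradiction cannot be forced. The converse is more mechanical but still requires a careful, branch-by-branch verification that the constructed $T$ satisfies \eqref{eqn:suzuki-condition}.
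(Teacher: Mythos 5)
The paper states Theorem \ref{thm:Suzuki-2008} without proof, as a quoted result of Suzuki \cite{Suzuki-2008}, so there is no in-paper argument to compare yours against; judging your outline on its own terms, it has two genuine gaps. In the sufficiency direction your opening moves are right ($d(x_{n+1},x_{n+2})\le r\,d(x_n,x_{n+1})$, the limit $z$, and the inequality $d(z,Ty)\le r\,d(z,y)$ for $y\ne z$ obtained by letting $n\to\infty$), but the entire content of the theorem is concentrated in the step you describe only as ``different algebraic identities, tailored exactly to the three branches of $\theta$'' --- you never produce them. Worse, the route you propose (apply $d(z,Ty)\le r\,d(z,y)$ inductively to get $T^nz\to z$ and then ``extract a contradiction'') is not the right skeleton for the hardest branch $1/\sqrt2\le r<1$: knowing $T^nz\to z$ together with $d(T^nz,T^{n+1}z)\le r^n d(z,Tz)$ is perfectly consistent and yields no contradiction by itself, and Suzuki's treatment of that branch abandons the orbit of $z$ in favour of a dichotomy on the original Picard iterates --- for every $n$, either $\theta(r)d(x_n,Tx_n)\le d(x_n,z)$ or $\theta(r)d(Tx_n,T^2x_n)\le d(Tx_n,z)$ --- which is exactly the argument this paper reproduces in \eqref{eqn:AUX}--\eqref{eqn:either-or-II} in the proof of Theorem \ref{thm:main}. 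So the missing case analysis is not a routine computation you can defer; for at least one branch your announced strategy does not close.

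The necessity direction has a concrete error. Extending $T$ off the subsequence ``by sending every other point to a fixed $u_{n_0}$'' does not verify \eqref{eqn:suzuki-condition}: for $x\notin\{u_n\}$ put $f(x)=\lim_m d(x,u_m)>0$ (the limit exists and is positive because $\{u_n\}$ is Cauchy with no limit in $X$); then $\theta(r)d(x,Tx)=\theta(r)d(x,u_{n_0})$ is eventually below $d(x,u_m)\approx f(x)$ once $n_0$ is large, so the hypothesis of \eqref{eqn:suzuki-condition} is satisfied for the pair $(x,u_m)$, yet $d(Tx,Tu_m)=d(u_{n_0},u_{m+1})$ is bounded below by a constant independent of $x$, while $r\,d(x,u_m)\approx r f(x)$ can be smaller than that constant when $f(x)$ is small. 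The target index must depend on $x$ (in Suzuki's construction one sends $x$ to $u_{\nu(x)}$ with $\nu(x)$ chosen large enough relative to $f(x)$), and verifying the implication for all pairs is the bulk of that proof, not ``an easy diagonal check.'' In short, your proposal correctly identifies the architecture but omits, or gets wrong where it is concrete, precisely the parts that make the theorem true.
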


The above Suzuki's generalized version of Theorem \ref{thm:Banach}
initiated a lot of work in this direction and
led to some important contribution in metric fixed point
theory. Several authors obtained
variations and refinements of Suzuki's result; see
\cite{Enjouji-Nakanishi-Suzuki-2009,
Kikkawa-Suzuki-2008,
Kikkawa-Suzuki-NA-2008,
Popescu-2009,
Singh-Mishra-2010,
Singh-Pathak-Mishra-2010}.

A mapping $T$ on a metric space $X$ is called \emph{contractive} if
$d(Tx,Ty)<d(x,y)$, for all $x,y\in X$ with $x\neq y$.
Edelstein in \cite{Edelstein-62} proved that, on compact
spaces, every contractive mapping possesses a unique fixed
point theorem. Then in \cite{Suzuki-NA-2009} Suzuki generalized Edelstein's
result as follows. %(Theorem \ref{thm:Suzuki-2009-NA} below).

\begin{thm}[Suzuki \cite{Suzuki-NA-2009}]
\label{thm:Suzuki-2009-NA}
  Let $(X,d\,)$ be a compact metric space and let $T$ be a mapping on X.
  Assume that
  \begin{equation}\label{eqn:suzuki-NA-2009}
    \forall\, x,y\in X\
    \Bigprn{\frac12 d(x,Tx) < d(x,y)\ \Longrightarrow\
    d(Tx,Ty)<d(x,y)}.
  \end{equation}
  Then $T$ has a unique fixed point.
\end{thm}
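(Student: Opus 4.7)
Uniqueness comes for free from the hypothesis: if $p\neq q$ were both fixed points of $T$, then $\tfrac12 d(p,Tp)=0<d(p,q)$ would activate \eqref{eqn:suzuki-NA-2009}, yielding the contradiction $d(p,q)=d(Tp,Tq)<d(p,q)$.

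For existence, the plan is to minimize the displacement functional $f(x)=d(x,Tx)$ over the compact space $X$. Let $\alpha=\inf_{x\in X}f(x)$ and pick a minimizing sequence $(x_n)$ with $f(x_n)\to\alpha$. Using compactness of $X$, I would pass to a subsequence along which $x_n\to u$ and $Tx_n\to v$ for some $u,v\in X$. Since $d(x_n,u)\to 0$, the triangle inequality gives $d(u,v)=\lim_n d(x_n,Tx_n)=\alpha$.

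The crux is to rule out $\alpha>0$. Assume $\alpha>0$. Then $\tfrac12 d(x_n,Tx_n)\to\tfrac{\alpha}{2}$ while $d(x_n,v)\to\alpha>\tfrac{\alpha}{2}$, so \eqref{eqn:suzuki-NA-2009} applies at $(x_n,v)$ for all large $n$ and yields $d(Tx_n,Tv)<d(x_n,v)$. Letting $n\to\infty$ gives $d(v,Tv)\leq\alpha$, and the infimum property forces $d(v,Tv)=\alpha$. But then \eqref{eqn:suzuki-NA-2009} at $(v,Tv)$ (valid because $\tfrac{\alpha}{2}<\alpha$) produces $d(Tv,T^2v)<\alpha$, contradicting $\alpha=\inf f$. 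Hence $\alpha=0$, so $v=u$ and $Tx_n\to u$.

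It remains to prove $Tu=u$, and this is the step I expect to be the main obstacle, because $T$ is not assumed continuous and now both sides of the Suzuki condition tend to $0$ simultaneously. Suppose towards a contradiction that $Tu\neq u$. I would case-split on whether $d(x_n,u)>\tfrac12 d(x_n,Tx_n)$ holds for infinitely many $n$. If so, applying the hypothesis to $(x_n,u)$ along that subsequence gives $d(Tx_n,Tu)<d(x_n,u)\to 0$, so $Tx_n\to Tu$; combined with $Tx_n\to u$, this forces $Tu=u$, a contradiction. In the opposite case I would refine the minimizing sequence by shifting to the iterates $T^kx_n$, which still converge to $u$ with $d(T^kx_n,T^{k+1}x_n)\to 0$ (thanks to the strict decrease obtained by applying \eqref{eqn:suzuki-NA-2009} to consecutive iterates), and extract from these a subsequence that does satisfy the favourable inequality, thereby reducing to the previous case. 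Threading the Suzuki-type hypothesis through the compactness argument to sidestep the lack of continuity of $T$ is the delicate technical heart of the proof.
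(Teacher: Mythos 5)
The paper states this theorem without proof---it is quoted from \cite{Suzuki-NA-2009}---so the only comparison available is with Suzuki's original argument, which your first three paragraphs reproduce correctly: the uniqueness argument, the proof that $\alpha=\inf_x d(x,Tx)=0$ via the minimizing sequence, and the identification $v=u$ with $Tx_n\to u$ are all sound.

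The genuine gap is in the final step, exactly where you flag it. In your second case you assume $d(x_n,u)\le\frac12 d(x_n,Tx_n)$ for all large $n$ and propose to pass to the iterates $T^kx_n$ and ``extract a subsequence that does satisfy the favourable inequality.'' The existence of such a subsequence is precisely what needs proof: nothing in your sketch rules out that \emph{every} pair $(T^kx_n,u)$ fails the inequality $\frac12 d(T^kx_n,T^{k+1}x_n)<d(T^kx_n,u)$. The missing lemma is that if $Tx\neq x$ then $d(x,u)\le\frac12 d(x,Tx)$ and $d(Tx,u)\le\frac12 d(Tx,T^2x)$ cannot both hold: applying \eqref{eqn:suzuki-NA-2009} to the pair $(x,Tx)$ gives $d(Tx,T^2x)<d(x,Tx)$, and then
\[
  d(x,Tx)\le d(x,u)+d(u,Tx)\le \tfrac12 d(x,Tx)+\tfrac12 d(Tx,T^2x)<d(x,Tx),
\]
a contradiction. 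With this lemma your reduction works and a single shift $k=1$ suffices: in the second case $\frac12 d(Tx_n,T^2x_n)<d(Tx_n,u)$ for all large $n$, so $d(T^2x_n,Tu)<d(Tx_n,u)\to0$, while $d(T^2x_n,u)\le d(T^2x_n,Tx_n)+d(Tx_n,u)\to0$, forcing $Tu=u$. This either--or device is exactly the one the paper itself uses in the proof of Theorem \ref{thm:main} (see \eqref{eqn:either-or}), so you should make it explicit rather than deferring it.
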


\noindent
It is interesting to note that,
although the above Suzuki's theorem generalizes Edelstein's theorem
in \cite{Edelstein-62}, these two theorems are not of the same type
\cite{Suzuki-NA-2009}.

Recently, the author proved the following fixed point theorem
for contractive mapping
which is a Susuki-type generalization of \cite[Theorem 1.1]{Geraghty-73}
and characterizes metric completeness.

\begin{thm}[Abtahi \cite{Abtahi}]
\label{thm:Abtahi-Geraghty}
  A metric space $(X,d)$ is complete if and only if
  every mapping $T:X\to X$ satisfying the following two conditions
  has a fixed point;
  \begin{enumerate}[\upshape(i)]
    \item\label{item:1-main}
      $(1/2)d(x,Tx) < d(x,y)$ implies $d(Tx,Ty) < d(x,y)$, for all $x,y\in X$.

    \item\label{item:2-main}
    There exists a point $x\in X$ such that, for any two subsequences $\set{x_{p_n}}$ and
    $\set{x_{q_n}}$ of the iterations $x_n=T^nx$, $n\in\N$, if
    $d(x_{p_n},Tx_{p_n}) \leq d(x_{p_n},x_{q_n})$ for all $n$,
    and $\De_n\to1$, then $\de_n\to0$, where
    \[
    \de_n=d(x_{p_n},x_{q_n}), \quad
    \De_n=d(Tx_{p_n},Tx_{q_n})/\de_n.
    \]
  \end{enumerate}
\end{thm}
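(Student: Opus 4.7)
The proof splits into the two implications. For sufficiency, assume $(X,d)$ is complete and $T$ satisfies (i) and (ii). Pick the distinguished point $x$ from (ii), form the Picard iterates $x_n = T^n x$, and set $c_n = d(x_n, x_{n+1})$. If some $x_n$ is a fixed point the conclusion is immediate, so I may assume $c_n > 0$ for all $n$. Then $\tfrac12 c_n < c_n$ triggers (i) on the pair $(x_n, x_{n+1})$, giving $c_{n+1} < c_n$, so $c_n \downarrow c \ge 0$. To force $c = 0$ I would apply (ii) with $p_n = n,\ q_n = n+1$: the hypothesis $d(x_{p_n}, T x_{p_n}) \le d(x_{p_n}, x_{q_n})$ holds with equality, $\de_n = c_n$, and $\De_n = c_{n+1}/c_n \to 1$ (if $c>0$), so (ii) yields $c_n \to 0$.

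The Cauchy property of $\{x_n\}$ is the next step, with (ii) used again. If $\{x_n\}$ is not Cauchy, pick $\e > 0$ and subsequences $p_n < q_n$ with $d(x_{p_n}, x_{q_n}) \ge \e$ and $q_n$ minimal, so $d(x_{p_n}, x_{q_n - 1}) < \e$; the triangle inequality together with $c_n \to 0$ forces $\de_n = d(x_{p_n}, x_{q_n}) \to \e$. Since $c_{p_n} \to 0$, eventually $c_{p_n} \le \de_n$, and (i) gives $d(T x_{p_n}, T x_{q_n}) < \de_n$, while the reverse triangle inequality yields $d(T x_{p_n}, T x_{q_n}) \ge \de_n - c_{p_n} - c_{q_n}$; hence $\De_n \to 1$. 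Then (ii) forces $\de_n \to 0$, contradicting $\de_n \to \e > 0$. By completeness $x_n \to z$, and to conclude $Tz = z$ I would use a Suzuki-type dichotomy: the strict decrease $c_{n+1} < c_n$ precludes both $\tfrac12 c_n \ge d(x_n, z)$ and $\tfrac12 c_{n+1} \ge d(x_{n+1}, z)$ holding simultaneously (the triangle inequality would then give $c_n < c_n$), so $\tfrac12 c_m < d(x_m, z)$ for infinitely many $m$; for such $m$, (i) yields $d(x_{m+1}, Tz) < d(x_m, z) \to 0$, which forces $Tz = z$. Uniqueness is immediate: two fixed points $z \ne w$ would give $\tfrac12 d(z, Tz) = 0 < d(z, w)$ and then $d(z, w) < d(z, w)$.

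For necessity, I would argue the contrapositive in the style of Suzuki \cite{Suzuki-2008}. Starting from a Cauchy sequence $\{u_n\} \subset X$ with no limit, I would thin so that the $u_n$ are distinct and $a_n = d(u_n, u_{n+1})$ decays rapidly (geometrically, say), and then define $T:X \to X$ by a forward rule: $T u_n = u_{n+1}$, and for $x \notin \{u_n\}$, $Tx = u_{N(x)+1}$ with $N(x)$ chosen so that $u_{N(x)}$ is a near-best approximation of $x$ from $\{u_n\}$. Fixed-point-freeness is automatic (a fixed point would compel convergence of $\{u_n\}$), and the geometric decay of $a_n$ ensures (i) via a direct estimate; (ii) holds with $x = u_1$ because the ratios $\De_n$ along that orbit stay bounded below $1$ by a fixed amount, so the antecedent $\De_n \to 1$ is never realised.

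The main obstacle is the Cauchy step. Condition (i) is asymmetric---its hypothesis involves $d(x_{p_n}, T x_{p_n})$ with the earlier index---so the invocation of (i) on the pairs $(x_{p_n}, x_{q_n})$ requires the preliminary bound $c_{p_n} \le \de_n$; this is why $c_n \to 0$ must be proved first, and why the passage from ``not Cauchy'' to ``$\De_n \to 1$'' hinges on two-sided triangle-inequality estimates that squeeze $\de_n$ from above by (i) and from below by the reverse triangle inequality. A secondary delicacy lies in the necessity direction, where the extension of $T$ off the sequence $\{u_n\}$ must be arranged so that (i) continues to hold for pairs involving points outside $\{u_n\}$ while (ii) is preserved for the chosen starting point.
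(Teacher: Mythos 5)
The paper does not actually prove this theorem---it is quoted from \cite{Abtahi}---so your argument can only be measured against the techniques the paper uses elsewhere. Your sufficiency half is correct and is essentially the expected route: the strict decrease of $c_n=d(x_n,x_{n+1})$, the application of (ii) first to the consecutive pairs $p_n=n$, $q_n=n+1$ to get $c_n\to0$, then to the ``minimal $q_n$'' pairs extracted from the negation of the Cauchy property (exactly the device in the proof of Theorem \ref{thm:sl(s)s is admissible}, with the reverse triangle inequality supplying the lower bound $\De_n\ge 1-(c_{p_n}+c_{q_n})/\de_n$), and finally the two-term dichotomy at the limit point, which is the $\al\equiv1$ special case of \eqref{eqn:either-or} in the proof of Theorem \ref{thm:main}. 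The only tidying needed is routine: pass to a tail so that $c_{p_n}\le\de_n$ holds for \emph{all} $n$ as (ii) literally requires, and arrange $p_n,q_n$ strictly increasing so that $\set{x_{p_n}}$, $\set{x_{q_n}}$ really are subsequences. Your uniqueness remark is also fine.

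The necessity half has a genuine gap. You propose to build $T$ from scratch by sending each point to the successor of a near-best approximant from a thinned Cauchy sequence with geometrically decaying gaps, and you assert that ``the geometric decay of $a_n$ ensures (i) via a direct estimate.'' That estimate is precisely the hard part and it is not supplied: (i) must be verified for \emph{every} pair $x,y\in X$, including pairs off the sequence whose near-best approximants carry different indices, and whether $d(Tx,Ty)=d(u_{j+1},u_{k+1})<d(x,y)$ holds when $d(x,y)$ barely exceeds $\tfrac12 d(x,Tx)$ depends on quantitative choices (the decay ratio, the tolerance in ``near-best'') that you never fix; for points roughly equidistant from two consecutive $u_j$ the inequality is borderline and the rule as stated is not obviously adequate. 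The clean way out---and visibly the intended one, since it is exactly what Section 3 of this paper does---is to construct nothing: the proof of Theorem 4 in \cite{Suzuki-2008} already produces, on any incomplete space, a fixed-point-free $T$ satisfying $\tfrac12 d(x,Tx)<d(x,y)\Rightarrow d(Tx,Ty)\le r\,d(x,y)$ for some $r\in(0,1)$. Such a $T$ satisfies (i) immediately (the hypothesis forces $d(x,y)>0$), and it satisfies (ii) vacuously for every starting point: whenever $d(x_{p_n},Tx_{p_n})\le\de_n$ with $\de_n>0$, Suzuki's condition applies and gives $\De_n\le r<1$, so the antecedent $\De_n\to1$ never occurs. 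Replace your construction by this citation and the proof closes.
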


\begin{rem}
  In part \eqref{item:1-main} of the above theorem, $1/2$ is the best constant.
\end{rem}

\section{Existence of fixed points for nonlinear contractions}

\begin{defn}
  Let $\phi:\R^+\to\R^+$ be a function. Given a metric space
  $(X,d)$, a mapping $T:X\to X$ is called a
  \emph{generalized $\phi$-contraction} if
  \begin{equation}\label{eqn:phi-contraction}
    \forall\, x,y\in X\
    \Bigprn{x\neq y,\ d(x,Tx) \leq d(x,y)\ \Longrightarrow\
    d(Tx,Ty) < \phi(d(x,y))}.
  \end{equation}
  We call $\phi$ \emph{admissible} if, for every metric space $X$,
  for every generalized $\phi$-contraction $T$
  on $X$, and for every choice of initial point $x\in X$, the iterations
  $x_n=T^n x$, $n\in\N$, form a Cauchy sequence.

\end{defn}

\begin{thm}\label{thm:L-function is admissible}
  Every L-function is admissible.
\end{thm}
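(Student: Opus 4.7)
The plan is to follow a Meir--Keeler/Boyd--Wong style argument, being careful that the Suzuki-type precondition $d(x,Tx)\leq d(x,y)$ in \eqref{eqn:phi-contraction} is automatically satisfied at every point where I wish to invoke it, since the step sizes $d_n:=d(x_n,x_{n+1})$ of the orbit $x_n=T^n x$ decay quickly. Fix an L-function $\phi$, a generalized $\phi$-contraction $T$ on $(X,d)$, and an initial $x\in X$. If $x_{n+1}=x_n$ for some $n$ then the orbit is eventually constant and hence Cauchy, so I may assume $d_n>0$ throughout.

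The preliminary step is to show $d_n\to 0$. Applying \eqref{eqn:phi-contraction} to $(x_n,x_{n+1})$ (with the precondition holding with equality) yields $d_{n+1}<\phi(d_n)$, and inserting $t=s$ in the defining property of an L-function gives $\phi(s)\leq s$ for every $s\geq 0$. Thus $d_n$ is strictly decreasing. If $r:=\lim d_n$ were positive, the L-function property at $s=r$ would furnish $\de>0$ with $\phi(t)\leq r$ on $[r,r+\de]$; for $n$ large enough, $d_n\in[r,r+\de]$, giving $d_{n+1}<\phi(d_n)\leq r$, a contradiction.

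The main step is to prove that $\set{x_n}$ is Cauchy. Given $\e>0$, apply the L-function definition at $s=\e$ to select $\de\in(0,\e)$ with $\phi(t)\leq\e$ for every $t\in[\e,\e+\de]$, and choose $N$ with $d_N<\de/2$ (hence $d_n<\de/2$ for all $n\geq N$ by monotonicity). I claim, by induction on $k\geq 0$, that
\[
  d(x_n,x_{n+k})<\e+\de\quad\text{for every } n\geq N.
\]
Cases $k=0,1$ are immediate. For $k\geq 2$, the inductive hypothesis gives $d(x_n,x_{n+k-1})<\e+\de$. If $d(x_n,x_{n+k-1})\leq\e$, then the triangle inequality together with $d_{n+k-1}<\de/2$ suffices. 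Otherwise $d(x_n,x_{n+k-1})\in(\e,\e+\de)$, and I apply \eqref{eqn:phi-contraction} to the ordered pair $(x_{n+k-1},x_n)$: the precondition $d_{n+k-1}\leq d(x_{n+k-1},x_n)$ holds because $d_{n+k-1}<\de/2<\e<d(x_n,x_{n+k-1})$, and the two points are distinct, so the conclusion gives
\[
  d(x_{n+k},x_{n+1}) < \phi\bigprn{d(x_n,x_{n+k-1})} \leq \e.
\]
Then $d(x_n,x_{n+k})\leq d_n+d(x_{n+1},x_{n+k})<\de/2+\e<\e+\de$, closing the induction.

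Since $\de<\e$, the claim yields $d(x_n,x_m)<2\e$ for all $m,n\geq N$, and as $\e$ is arbitrary this proves $\set{x_n}$ is Cauchy. I anticipate the main obstacle to be the asymmetry in the precondition of \eqref{eqn:phi-contraction}: one must apply it to the pair $(x_{n+k-1},x_n)$ rather than $(x_n,x_{n+k-1})$, so that the precondition involves the small quantity $d_{n+k-1}$. Verifying that this precondition persists throughout the induction is exactly what forces us to establish the decay $d_n\to 0$ first.
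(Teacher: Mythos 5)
Your proof is correct and follows essentially the same strategy as the paper's: first the step sizes $d(x_n,x_{n+1})$ are shown to be strictly decreasing with limit $0$ via the L-function property (including the observation that $\phi(s)\leq s$), and then Cauchyness is proved by induction on the index gap using the $\delta$ furnished by the L-function definition together with the triangle inequality. The only difference is how the Suzuki precondition in \eqref{eqn:phi-contraction} is discharged in the inductive step: the paper splits on whether $d(x_n,Tx_n)\leq d(x_n,x_{n+m})$ holds and concludes directly from the triangle inequality when it fails, whereas you apply \eqref{eqn:phi-contraction} to the reversed pair $(x_{n+k-1},x_n)$ so that the precondition is automatic; both variants are valid.
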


\begin{proof}
  Let $\phi$ be an L-function and let $T$ be a generalized $\phi$-contraction
  on a metric space $X$. Fix $x\in X$ and let $x_n=T^nx$, $n\in\N$. If $d(x_m,x_{m+1})=0$,
  for some $m$, then $x_n=x_m$ for $n\geq m$ and there is nothing
  to prove. Assume that $d(x_n,x_{n+1})>0$ for all $n$. Since
  $d(x_n,Tx_n) \leq d(x_n,Tx_n)$ and $x_n\neq x_{n+1}$, condition \eqref{eqn:phi-contraction}
  implies that, for every $n\in\N$,
  \[
    d(x_{n+1},x_{n+2}) < \phi(d(x_{n},x_{n+1}))
    \leq d(x_{n},x_{n+1}).
  \]

  \noindent
  This shows that the sequence $\set{d(x_n,x_{n+1})}$ is strictly decreasing
  and thus it converges to some point $s\geq0$. If $s>0$, since $\phi$ is an L-function,
  there is $\de>0$ such that $\phi(t)\leq s$ for $s \leq t \leq s+\de$. Take $n\in\N$
  large enough so that $s \leq d(x_n,x_{n+1}) \leq s+\de$. Then
  \[
    d(x_{n+1},x_{n+2}) < \phi(d(x_{n},x_{n+1})) \leq s,
  \]
  which is a contradiction. Hence $d(x_n,x_{n+1})\to0$.

  Next, we show that $\set{x_n}$ is a Cauchy sequence. To this
  end we adopt the same method used by Suzuki in \cite{Suzuki-MK-Lim}.
  Fix $\e>0$ and let $s=\e/2$. Since $\phi$ is an L-function,
  there exists $\de\in(0,s)$ such that
  $\phi(t)\leq s$ for $s \leq t \leq s+\de$. Since $d(x_n,x_{n+1})\to0$,
  there is $N\in\N$ such that $d(x_n,x_{n+1})<\de$ for $n\geq N$.
  We show that
  \begin{equation}\label{eqn:claim}
    d(x_n,x_{n+m})<\de+s\leq \e, \qquad (n\geq N,\,m\in\N).
  \end{equation}

  \noindent
  For every $n\geq N$, we prove \eqref{eqn:claim} by induction on $m$.
  It is obvious that \eqref{eqn:claim} holds for $m=1$. Assume that
  \eqref{eqn:claim} holds for some $m\in\N$. Then
  $\phi(d(x_n,x_{n+m}))\leq s$. Now, if
  $d(x_n,Tx_n)\leq d(x_n,x_{n+m})$ then \eqref{eqn:phi-contraction}
  shows that $d(x_{n+1},x_{n+m+1})<\phi(d(x_n,x_{n+m}))$ and thus
  \[
    d(x_n,,x_{n+m+1})\leq d(x_n,x_{n+1})+d(x_{n+1},x_{n+m+1})<\de+s\leq\e.
  \]

  \noindent
  If $d(x_n,x_{n+m})<d(x_n,Tx_n)$ then $d(x_n,x_{n+m})<\de$ and thus
  \[
    d(x_n,,x_{n+m+1})\leq d(x_n,x_{n+m})+d(x_{n+m},x_{n+m+1})<\de+\de \leq \de+s \leq\e.
  \]

  \noindent
  Therefore \eqref{eqn:claim} is verified and $\set{x_n}$ is a Cauchy sequence.
\end{proof}

As in \cite{Geraghty-73}, we define $\cS$ to be the class of all functions
$\al:\R^+\to [0,1]$ such that, for any sequence $\set{s_n}$ of
positive numbers, if $\al(s_n)\to1$ then $s_n\to0$.

\begin{thm}\label{thm:sl(s)s is admissible}
  If $\al\in \cS$, the function $\phi(s)=\al(s)s$ is admissible.
\end{thm}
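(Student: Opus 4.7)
The plan is to follow the same two-step strategy as in the proof of Theorem~\ref{thm:L-function is admissible}, replacing the L-function property with the defining property of $\cS$ at each crucial step. Fix a generalized $\phi$-contraction $T$ on a metric space $X$, a starting point $x\in X$, and set $x_n=T^nx$. If $x_{m+1}=x_m$ for some $m$ there is nothing to prove, so I may assume $d_n:=d(x_n,x_{n+1})>0$ for all $n$. Applying \eqref{eqn:phi-contraction} with $y=x_{n+1}$ (noting $d(x_n,Tx_n)=d_n=d(x_n,y)$ and $x_n\neq y$) gives
\[
  d_{n+1}<\phi(d_n)=\al(d_n)\,d_n\leq d_n,
\]
so $\{d_n\}$ is strictly decreasing to some $s\geq0$. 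If $s>0$, then $d_{n+1}/d_n\to1$ and $d_{n+1}/d_n<\al(d_n)\leq1$ forces $\al(d_n)\to1$, whence $\al\in\cS$ yields $d_n\to0$, contradicting $s>0$. Hence $d_n\to0$.

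For the Cauchy step I argue by contradiction. If $\set{x_n}$ fails to be Cauchy, the standard extraction produces $\e>0$ and indices $m_k<n_k$ with $r_k:=d(x_{m_k},x_{n_k})\geq\e$, $n_k$ chosen minimally (so $d(x_{m_k},x_{n_k-1})<\e$), and one checks in the usual way that $r_k\to\e$. For $k$ large enough that $d_{m_k}<\e\leq r_k$ and $x_{m_k}\neq x_{n_k}$, condition \eqref{eqn:phi-contraction} applies to $(x_{m_k},x_{n_k})$ and gives $d(x_{m_k+1},x_{n_k+1})<\phi(r_k)=\al(r_k)\,r_k$. The triangle inequality then yields
\[
  r_k\leq d_{m_k}+\al(r_k)\,r_k+d_{n_k},
  \quad\text{i.e.,}\quad
  r_k\bigprn{1-\al(r_k)}\leq d_{m_k}+d_{n_k}\to 0.
\]
Since $r_k\to\e>0$, this forces $\al(r_k)\to1$, and $\al\in\cS$ then demands $r_k\to0$, contradicting $r_k\to\e$.

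The main obstacle is the Cauchy step: one must justify why the hypothesis $d(x_{m_k},Tx_{m_k})\leq d(x_{m_k},x_{n_k})$ of \eqref{eqn:phi-contraction} holds for all large $k$, and then isolate the factor $1-\al(r_k)$ cleanly from the triangle inequality. Once this bookkeeping is done, the fact that $\al\in\cS$ converts the forced convergence $\al(r_k)\to1$ directly into the required contradiction; both the monotone-decrease-to-zero step for $\set{d_n}$ and the Cauchy step rely on $\cS$ in exactly the same structural way, namely as a substitute for the local upper bound on $\phi$ that L-functions provide.
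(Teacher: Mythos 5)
Your proposal is correct and follows essentially the same route as the paper: first show $d(x_n,x_{n+1})\to0$ by extracting $\al(d_n)\to1$ from the monotone decrease, then rule out failure of the Cauchy property by the standard minimal-index extraction, applying \eqref{eqn:phi-contraction} to $(x_{m_k},x_{n_k})$ (justified exactly as you indicate, since $d_{m_k}\to0$ while $r_k\geq\e$) and squeezing $\al(r_k)\to1$ from the triangle inequality. Your rearrangement $r_k(1-\al(r_k))\leq d_{m_k}+d_{n_k}$ is just a cleaner form of the paper's division by $r_k$; there is no substantive difference.
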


\begin{proof}
  Let $\al\in\cS$ and define $\phi(s)=\al(s)s$.
  Let $T$ be a generalized $\phi$-contraction on a metric space $X$, let
  $x\in X$ and let $x_n=T^nx$, $n\in\N$. Let $s_n=d(x_n,x_{n+1})$.
  As in the proof of Theorem \ref{thm:L-function is admissible}, we assume
  that $s_n>0$ for all $n$. Then $s_{n+1}<\al(s_n)s_n$ and thus $s_n\to s$
  for some point $s\geq0$. If $s>0$ then $s_{n+1}/s_n\to1$ and thus
  $\al(s_n)\to1$. Since $\al\in\cS$, we must have $s=0$ which is
  a contradiction. Hence $s=0$ and $d(x_n,x_{n+1})\to0$.

  For every $n\in\N$, choose $k_n\in\N$ such that
  $d(x_m,x_{m+1})<1/n$ for $m\geq k_n$. If $\set{x_n}$ is not a Cauchy sequence,
  there exist $\e>0$ and sequences $\set{p_n}$ and $\set{q_n}$ of positive integers
  such that $q_n>p_n\geq k_n$ and $d(x_{p_n},x_{q_n})\geq\e$. We also
  assume that $q_n$ is the least such integer so that
  $d(x_{p_n},x_{q_n-1})<\e$. Therefore,
  \[
    \e \leq d(x_{p_n},x_{q_n})
       \leq d(x_{p_n},x_{q_n-1})+d(x_{q_n-1},x_{q_n}) < \e+1/n.
  \]

  \noindent
  This shows that $s_n\to\e$. Since we have, for every  $n\in\N$,
  \[
    d(x_{p_n},Tx_{p_n}) \leq d(x_{p_n},x_{q_n})
     < d(x_{p_n},x_{q_n}),
  \]
  condition \eqref{eqn:phi-contraction} shows that
  $d(x_{p_n+1},x_{q_n+1})<\al(s_n)s_n$. Hence we have
  \begin{align*}
    s_n = d(x_{p_n},x_{q_n})
       & \leq d(x_{p_n},x_{p_n+1}) + d(x_{p_n+1},x_{q_n+1}) + d(x_{q_n+1},x_{q_n}) \\
       & < 2/n + \al(s_n)s_n.
  \end{align*}

  \noindent
  Dividing the above inequality by $s_n$, since $\al(s_n)\leq 1$,
  we get $\al(s_n)\to1$ and thus
  $s_n\to0$ which is a contradiction. Therefore, $\set{x_n}$ is a Cauchy sequence.
\end{proof}

\begin{defn}\label{defn:class PSI and PHI}
  A function $\al:\R^+\to(0,1]$ is said to be of class $\PSI$,
  written $\al\in\PSI$, if the function $\phi(s)=\al(s)s$ is
  admissible and, moreover, there exists $\de>0$ such that
  \begin{equation}\label{eqn:condition-on-alpha}
    0<t<\de,\ 0 < s < \al(t)t \ \Longrightarrow\  \al(t) \leq \al(s).
  \end{equation}

  \noindent
  Given two points $x$ and $y$ in a metric space $(X,d\,)$,
  by $\al(x,y)$ we always mean $\al(d(x,y))$.
\end{defn}

\begin{ex}
  Every decreasing function $\al:\R^+\to(0,1]$ is of class $\PSI$. For example,
  if $\al(s)=(1+s)^{-1}$, then $\al\in\PSI$.
\end{ex}

\begin{thm}\label{thm:main}
  Let $(X,d\,)$ be a complete metric space and let $T$ be a mapping on $X$.
  Assume that there is a function $\al\in\PSI$ such that
  \begin{equation}\label{eqn:our-condition-on-T}
    \ETA{\al(x,Tx)}d(x,Tx) < d(x,y) \ \Longrightarrow \
    d(Tx,Ty) < \al(x,y)d(x,y),
  \end{equation}
  holds for every $x,y\in X$.
  Then $T$ has a unique fixed point.
\end{thm}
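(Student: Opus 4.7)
The plan is to first use admissibility to produce a candidate fixed point as the limit of the iterates, then verify it is fixed via a Suzuki-style dichotomy, and finally dispatch uniqueness by a direct calculation.

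As a preliminary I would observe that $T$ is a generalized $\phi$-contraction for $\phi(s)=\al(s)s$. Indeed, suppose $x\neq y$ and $d(x,Tx)\le d(x,y)$; since $\al>0$, the factor $\ETA{\al(x,Tx)}$ is strictly less than $1$, so $\ETA{\al(x,Tx)}d(x,Tx)<d(x,y)$ (the case $d(x,Tx)=0$ being immediate because $x\neq y$), and \eqref{eqn:our-condition-on-T} yields $d(Tx,Ty)<\al(x,y)d(x,y)=\phi(d(x,y))$. Since $\al\in\PSI$ makes $\phi$ admissible, the iterates $x_n=T^n x_0$ are Cauchy and, by completeness, converge to some $z\in X$. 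Writing $t_n:=d(x_n,x_{n+1})$, one also has $t_n\to 0$ and, away from the trivial case where some $t_n$ vanishes, $t_{n+1}<\al(t_n)t_n$.

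The main step is to prove $Tz=z$. Set $\al_n:=\al(t_n)$. I claim that for every sufficiently large $n$ at least one of
\[
  \ETA{\al_n}t_n<d(x_n,z), \qquad \ETA{\al_{n+1}}t_{n+1}<d(x_{n+1},z)
\]
holds. Were both to fail, the triangle inequality $t_n\le d(x_n,z)+d(x_{n+1},z)$ would give $t_n\al_n/(1+\al_n)\le t_{n+1}/(1+\al_{n+1})$, which combined with $t_{n+1}<\al_n t_n$ forces $\al_{n+1}<\al_n$. But once $t_n<\de$ (the constant from Definition \ref{defn:class PSI and PHI}), applying \eqref{eqn:condition-on-alpha} with $t=t_n$, $s=t_{n+1}$ yields $\al_n\le\al_{n+1}$, a contradiction. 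Extracting a subsequence along which one of the two alternatives holds and invoking \eqref{eqn:our-condition-on-T} with $y=z$ gives either $d(x_{n+1},Tz)<\al(x_n,z)d(x_n,z)\le d(x_n,z)$ or the shifted bound $d(x_{n+2},Tz)\le d(x_{n+1},z)$; since $d(x_n,z)\to 0$ and $x_{n+1}\to z$, we conclude $Tz=z$.

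Uniqueness is straightforward: if $z\neq w$ are two fixed points, then $\ETA{\al(z,Tz)}d(z,Tz)=0<d(z,w)$, so \eqref{eqn:our-condition-on-T} gives $d(z,w)=d(Tz,Tw)<\al(z,w)d(z,w)\le d(z,w)$, which is absurd. The main obstacle I anticipate is the dichotomy above: its success turns on the strict inequality $t_{n+1}<\al_n t_n$ combining precisely with the monotonicity clause \eqref{eqn:condition-on-alpha} in the definition of $\PSI$, so verifying that this clause is tailored to exactly this purpose is the delicate point.
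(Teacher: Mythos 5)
Your proposal is correct and follows essentially the same route as the paper: reduce to the generalized $\phi$-contraction condition, use admissibility to get a Cauchy sequence of iterates, and establish the Suzuki-type dichotomy by combining the triangle inequality with the monotonicity clause \eqref{eqn:condition-on-alpha}. The only cosmetic difference is that you derive $\al_{n+1}<\al_n$ from the failure of the dichotomy and contradict it with \eqref{eqn:condition-on-alpha}, whereas the paper first records the inequality $\ETA{\al_n}+\al_n\ETA{\al_{n+1}}\leq 1$ and contradicts $d(x_n,Tx_n)<d(x_n,Tx_n)$; these are equivalent manipulations.
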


\begin{proof}
 First, let us prove the uniqueness part of the theorem.
 If $z\in X$ is a fixed point of $T$ and $y\neq z$ then
 \[
   \ETA{\al(z,Tz)}d(z,Tz) < d(z,y),
 \]
 and thus by \eqref{eqn:our-condition-on-T} we have
 $d(Tz,Ty) < d(z,y)$. Since $Tz=z$, we must have $Ty\neq y$,
 i.e., $y$ is not a fixed point of $T$.

 Now, we prove the existence of the fixed point.
 Take two points $x,y\in X$ with $x\neq y$. If $d(x,Tx)\leq d(x,y)$
 then $\ETA{\al(x,Tx)}d(x,Tx)<d(x,y)$, because $\al(x,Tx)>0$
 and $d(x,y)>0$.
 Hence $T$ satisfies condition \eqref{eqn:phi-contraction} with
 $\phi(s)=\al(s)s$. Fix $x\in X$ and define $x_n=T^nx$,
 $n\in \N$. Since the function $\phi(s)=\al(s)s$
 is admissible, the sequence $\set{x_n}$ is Cauchy.
 Since $X$ is complete,
 there is $z\in X$ such that $x_n\to z$. Next, we show that $Tz=z$.

 If $x_m=Tx_m$ for some $m$, the $x_n=z$ for $n\geq m$ and $Tz=z$.
 We assume that $x_n\neq Tx_n$ for all $n$.
 Since $\al\in\PSI$, condition \eqref{eqn:condition-on-alpha} holds
 for some $\de>0$. Take a positive number $N$ such that
 $d(x_n,Tx_n)<\de$ for $n\geq N$. Then
 \[
   0 < d(Tx_n,T^2x_n) < \al(x_n,Tx_n)d(x_n,Tx_n),
   \qquad (n\geq N),
 \]

 \noindent
 and condition \eqref{eqn:condition-on-alpha} shows that
 $\al(x_n,Tx_n) \leq \al(Tx_n,T^2x_n)$, for $n\geq N$, so that
 \begin{equation}\label{eqn:AUX}
    \frac1{1+\al(x_n,Tx_n)}+\frac{\al(x_n,Tx_n)}{1+\al(Tx_n,T^2x_n)}
    \leq 1.
 \end{equation}

 \noindent
 We claim that
 \begin{equation}\label{eqn:either-or}
   \forall\,n\geq N, \quad
   \left\{
     \begin{array}{l}
       \ETA{\al(x_n,Tx_n)}d(x_n,Tx_n) < d(x_n,z),\\[1ex]
       \text{\quad or}\\[1ex]
       \ETA{\al(Tx_n,T^2x_n)}d(Tx_n,T^2x_n) < d(x_{n+1},z).
     \end{array}
   \right.
 \end{equation}

 \noindent
 If \eqref{eqn:either-or} fails to hold, then, for some $n\geq N$, we have
 \begin{align*}
       d(x_n,z) & \leq \ETA{\al(x_n,Tx_n)}d(x_n,Tx_n),\\
       d(x_{n+1},z) & \leq \ETA{\al(Tx_n,T^2x_n)}d(Tx_n,T^2x_n).
 \end{align*}

 \noindent
 Using \eqref{eqn:AUX}, we have
 \begin{align*}
   d(x_n & , Tx_n) \leq d(x_n,z)+d(Tx_n,z) \\
    & \leq \ETA{\al(x_n,Tx_n)}d(x_n,Tx_n) + \ETA{\al(Tx_n,T^2x_n)}d(Tx_n,T^2x_n) \\
    & < \bigl[\ETA{\al(x_n,Tx_n)}+\ETA{\al(Tx_n,T^2x_n)}\al(x_n,Tx_n)\bigr]d(x_n,Tx_n)\\
    & \leq d(x_n,Tx_n).
 \end{align*}
 This is absurd and thus \eqref{eqn:either-or} must hold.
 Now condition \eqref{eqn:our-condition-on-T} together
 with \eqref{eqn:either-or} imply that
 \begin{equation}\label{eqn:either-or-II}
 \begin{split}
   \forall n\geq N,\quad
   d(x_{n+1},Tz) < \phi(d(x_n,z))
   \ \text{or}\ d(x_{n+2},Tz) < \phi(d(x_{n+1},z)).
 \end{split}
 \end{equation}

 \noindent
 Since $x_n\to z$ and $\phi(s)\leq s$, condition \eqref{eqn:either-or-II} implies
 the existence of a subsequence of $\set{x_n}$ that converges
 to $Tz$. This shows that $Tz = z$.
\end{proof}

The following theorem states that, for a certain family of functions
$\al\in\PSI$, the coefficient $1/(1+\al)$ in Theorem \ref{thm:main}
is the best.

\begin{thm}\label{thm:best-constant}
 Let the function $\al\in\PSI$ satisfy the following condition;
 \begin{equation}
   \al_0=\liminf\limits_{s\to0+}\al(s)>1/\sqrt2.
 \end{equation}

 \noindent
 Then, for every constant $\eta$ with $\eta>1/(1+\al_0)$,
 there exist a complete metric space $(X, d\,)$ and a mapping
 $T:X\to X$ such that $T$ does not have a fixed point and
 \[
    \forall\, x, y\in X,\
    \bigprn{\eta d(x,Tx) < d(x,y) \ \Longrightarrow \  d(Tx,Ty) < \al(x,y)d(x,y)}.
 \]
\end{thm}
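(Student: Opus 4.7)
The plan is to construct, for each $\eta>1/(1+\al_0)$, a complete metric space $(X,d)$ and a fixed-point-free self-map $T$ satisfying the displayed implication. The construction is engineered so that the key inequality~\eqref{eqn:AUX} used in the proof of Theorem~\ref{thm:main} is made tight: one produces a sequence of iterates converging to a limit $z$ at which $T$ is discontinuous, with both alternatives in~\eqref{eqn:either-or} failing at each step.

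Because $\eta>1/(1+\al_0)$ is equivalent to $(1-\eta)/\eta<\al_0$, pick $\rho\in\bigl((1-\eta)/\eta,\al_0\bigr)$ close to $\al_0$ and, using $\al_0=\liminf_{s\to 0^+}\al(s)$, fix $\epsilon>0$ so small that $\al(s)>\rho$ for $0<s\leq\epsilon$. Setting $s_n:=\epsilon\rho^n$, I would take $X$ to contain a sequence $\{x_n\}_{n\geq 0}$ together with a limit point $z$ satisfying $d(x_n,x_{n+1})=s_n$ and $d(x_n,z)=\eta s_n$; the triangle inequality $s_n\leq\eta s_n+\eta s_{n+1}$ is precisely the condition $\rho\geq (1-\eta)/\eta$, so this skeleton is consistent. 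Distances $d(x_n,x_m)$ with $|n-m|\geq 2$ are taken as shortest paths through $z$, namely $\eta(s_n+s_m)$. With $Tx_n:=x_{n+1}$, the pair $(x_n,x_{n+1})$ yields $s_{n+1}=\rho s_n<\al(s_n)s_n$; the pair $(x_n,z)$ has premise $\eta s_n<\eta s_n$ which is false, giving no constraint; and non-adjacent $(x_n,x_m)$ pairs reduce to the same ratio $\rho$.

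The main obstacle is defining $Tz$ so that no fixed point arises and every pair involving $z$ satisfies the hypothesis. The choice $Tz=z$ is forbidden, and $Tz=x_k$ is ruled out because then $(z,x_k)$ would demand $\al>1/\eta>1$. The remedy is to adjoin a second sequence $\{w_n\}_{n\geq 0}$, also converging to $z$, with $d(w_n,w_{n+1})=\epsilon'\rho^n$ and $d(w_n,z)=\eta\epsilon'\rho^n$ for $n\geq 1$ but with an enlarged base distance $d(w_0,z)=A\geq\epsilon$; then set $Tz:=w_0$ and $Tw_n:=w_{n+1}$. The bound $A\geq\epsilon$ ensures $\eta A\geq d(z,x_n)$ for all $n$, so every premise $(z,x_n)$ fails. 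The pair $(z,w_0)$ then forces $\epsilon'\in\bigl[A/(1+\eta\rho),\al(A)A\bigr)$; this interval is nonempty iff $\al(A)(1+\eta\rho)>1$, and a short computation at the critical values $\eta=1/(1+\al_0)$, $\rho=\al_0$ reduces this to $2\al_0^2>1$, that is $\al_0>1/\sqrt{2}$---precisely the standing hypothesis. All remaining cross-pairs among $\{x_n\}$, $\{w_n\}$ and $z$ are verified by the same through-$z$ calculation.

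Completeness of $X$ holds because every Cauchy sequence in $X$ is eventually inside one of the two arms and therefore converges to $z\in X$. By construction $Tx_n\neq x_n$, $Tw_n\neq w_n$, and $Tz=w_0\neq z$, so $T$ has no fixed point, yielding the required counterexample.
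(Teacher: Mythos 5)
Your overall strategy is sound in spirit---you are rebuilding, by hand, the kind of counterexample that Suzuki uses, whereas the paper takes a much shorter route: it quotes the fixed-point-free map $T$ on a closed bounded subset of $\R$ from the proof of Theorem~3 of \cite{Suzuki-2008} (which satisfies $(1+r)^{-1}|x-Tx|<|x-y|\Rightarrow|Tx-Ty|\le r|x-y|$ for a chosen $r\in(1/\sqrt2,\al_0)$ with $(1+r)^{-1}<\eta$) and then simply rescales the metric by $1/M$ so that \emph{every} distance in $X$ falls below the threshold $\de$ where $\al(s)>r$, which instantly upgrades $r\,d(x,y)$ to $\al(x,y)d(x,y)$. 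That rescaling step is exactly what your construction is missing, and its absence is a genuine gap, not just an omitted routine check.

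Concretely: you fix $\e$ so that $\al(s)>\rho$ only for $0<s\le\e$, you set $s_0=d(x_0,x_1)=\e$, and you are then forced to take $A=d(z,w_0)\ge\e$ so that the premises $(z,x_n)$ fail. But then every pair involving $w_0$ and a point on the other arm has distance exceeding $\e$; for instance $d(x_0,w_0)=\e(1+\eta)>\e$, the premise $\eta\e<\e(1+\eta)$ holds, and you must verify $d(x_1,w_1)<\al\bigl(\e(1+\eta)\bigr)\,\e(1+\eta)$, where $\al\bigl(\e(1+\eta)\bigr)$ is completely uncontrolled --- membership in $\PSI$ with $\al_0>1/\sqrt2$ says nothing about $\al$ away from $0$. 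A legitimate $\al\in\PSI$ (e.g.\ $\al=0.9$ on $[0,1]$ and $\al=0.1$ on $(1,\infty)$, which lies in Geraghty's class $\cS$ and satisfies \eqref{eqn:condition-on-alpha}) defeats your construction outright at these pairs, since $d(x_1,w_1)$ is of order $\e$ while the right-hand side is of order $0.1\,\e$. The same problem afflicts $(w_0,x_n)$ and $(w_0,w_m)$ for $m\ge2$, all of which you dismiss as ``verified by the same through-$z$ calculation.'' The repair is exactly the paper's rescaling idea: start the geometric arms at a scale $s_0$ small enough that the \emph{diameter} of the whole configuration (roughly $A+\eta s_0$ with $A=s_0$) stays below $\e$; then $\al>\rho$ applies to every pair, the interval condition for $\epsilon'$ becomes $\rho(1+\eta\rho)>1$, and the computation you sketch at $\eta=1/(1+\al_0)$, $\rho=\al_0$ still reduces this to $2\al_0^2>1$. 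With that change (plus an explicit reduction to $\eta<1$, which is harmless since the condition for smaller $\eta$ implies it for larger $\eta$, and a full verification of the triangle inequality for the star metric), your argument would go through as a self-contained alternative to the paper's citation of Suzuki.
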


\begin{proof}
  Take a number $r\in(1/\sqrt2,\al_0)$ such that $(1+r)^{-1}<\eta$.
  The proof of Theorem~3 in \cite{Suzuki-2008} shows that there exist
  a closed and bounded subset $X$ of $\R$ and a mapping $T:X\to X$
  such that $T$ does not have a fixed point and
  \begin{equation}\label{eqn:from-Suzuki}
     \forall\, x, y\in X\
     \Bigprn{(1+r)^{-1}|x-Tx| < |x-y| \ \Longrightarrow \
     |Tx-Ty| \leq r|x-y|}.
  \end{equation}

  \noindent
  Since $r<\liminf\limits_{s\to0+}\al(s)$, there exists $\de>0$ such that
  $r<\al(s)$ for $s\in(0,\de)$. Since $X$ is bounded, there is a constant
  $M$ such that $|x-y| < M\de$, for all $x,y\in X$.
  Now, define a metric $d$ on $X$ by
  \[
    d(x,y) = \frac1M|x-y|, \qquad (x,y\in X).
  \]

  \noindent
  For $x,y\in X$, if $\eta d(x,Tx) < d(x,y)$ then
  $(1+r)^{-1}d(x,Tx) < d(x,y)$. Now, condition \eqref{eqn:from-Suzuki}
  and the fact that $d(x,y)<\de$ shows that
  \[
    d(Tx,Ty) \leq r d(x,y) < \al(d(x,y))d(x,y).
  \]
\end{proof}

\begin{ex}
  For the function $\al(s)=(1+s)^{-1}$, we have $\al_0=1$.
  Hence $\al$ satisfies the condition
  in Theorem \ref{thm:best-constant}.
\end{ex}

\section{Metric Completion}

In this section, we discuss the metric completeness. Let $X$ be a nonempty set.
We say that two metrics $d$ and $\rho$ on $X$ are equivalent if
they generate the same topology and the same Cauchy sequences.
Given a metric $\rho$ on $X$, we denote the family of all metrics $d$ on $X$
equivalent to $\rho$ by $\EQU_\rho$. It is obvious that $(X,\rho)$ is complete
if and only if $(X,d)$, for some $d\in\EQU_\rho$, is complete if and only if
$(X,d)$, for all $d\in\EQU_\rho$, is complete. For a function $\al\in\PSI$, we
define
\[
  \al_0=\liminf\limits_{s\to0+}\al(s),
\]

\noindent
and we denote by $\PSI^+$ the family of those functions $\al\in\PSI$
with $\al_0>0$.

\begin{thm}
  For a metric space $(X, \rho)$ the following are equivalent:
  \begin{enumerate}
    \item \label{item:X is complete}
    The space $(X,\rho)$ is complete.

    \item \label{item:for-all-al}
    For every $\al\in\PSI$ and $d\in\EQU_\rho$, every mapping
    $T$ satisfying \eqref{eqn:our-condition-on-T} has a fixed point.

    \item \label{item:for-some-al-eta}
    For some $\al\in\PSI^+$ and $\eta\in(0,1/2]$, and
    for all $d\in\EQU_\rho$, every mapping $T$ satisfying
    the following condition has a fixed point;
   \begin{equation}\label{eqn:our-condition-on-T-eta-al}
     \forall\, x, y\in X,\
     \bigprn{\eta d(x,Tx) < d(x,y) \ \Longrightarrow \  d(Tx,Ty) < \al(x,y)d(x,y)}.
   \end{equation}
\end{enumerate}
\end{thm}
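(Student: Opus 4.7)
\emph{Plan of proof.} I will close the cycle $(1)\Rightarrow(2)\Rightarrow(3)\Rightarrow(1)$. The first two implications are direct consequences of what has already been established; the third is the substantive one and rests on a Suzuki-style counter-example construction.

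For $(1)\Rightarrow(2)$, every $d\in\EQU_\rho$ makes $(X,d)$ complete, so Theorem \ref{thm:main} directly delivers the fixed point. For $(2)\Rightarrow(3)$, I take the specific witnesses $\al(s)=(1+s)^{-1}$, which lies in $\PSI^+$ with $\al_0=1$, and $\eta=1/2$. Because every function in $\PSI$ is $(0,1]$-valued, one has $\ETA{\al(x,Tx)}\geq 1/2=\eta$, and hence any $T$ satisfying \eqref{eqn:our-condition-on-T-eta-al} automatically satisfies the stronger-premise condition \eqref{eqn:our-condition-on-T}; (2), applied with this $\al$ and the same $d$, then yields a fixed point.

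For $(3)\Rightarrow(1)$ I argue the contrapositive. Suppose $(X,\rho)$ is not complete and fix arbitrary $\al\in\PSI^+$ and $\eta\in(0,1/2]$. After passing to a subsequence, extract a non-convergent Cauchy sequence $\set{u_n}$ of pairwise distinct terms; since $\set{u_n}$ does not converge in $X$, the set $A=\set{u_n:n\in\N}$ is closed and discrete in $X$. Pick $r\in(0,\min(\al_0,1))$ and a scaling constant $c>0$ to be determined. Define $T:X\to X$ by $Tu_n=u_{n+1}$ for every $n$ and $Tx=u_1$ for $x\notin A$; then $T$ has no fixed point. The task is to produce a metric $d\in\EQU_\rho$ under which $T$ satisfies \eqref{eqn:our-condition-on-T-eta-al}. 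Following the template of Suzuki \cite{Suzuki-2008}, I put $d(u_n,u_m)=c|r^n-r^m|$ on $A$ and glue this to $\rho$ on $X\setminus A$ through the missing Cauchy limit so that the topology and the family of Cauchy sequences are preserved. On $A$ the required implication reduces by direct calculation to the inequality $r<\al(d(u_n,u_m))$, which holds uniformly once $c$ is taken small enough to keep all relevant distances inside the neighbourhood of $0$ where $\al>r$ (available because $r<\al_0$); in the mixed cases involving a point outside $A$ the required inequality is automatic because $T$ collapses $X\setminus A$ to the single point $u_1$. The principal obstacle is the metric-gluing step: one must construct $d\in\EQU_\rho$ that restricts to the prescribed Suzuki-type distances on $A$ without disturbing either topological equivalence or Cauchy-sequence equivalence with $\rho$.
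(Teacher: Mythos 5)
Your first two implications are fine and match the paper: completeness passes to every $d\in\EQU_\rho$, and since every $\al\in\PSI$ is $(0,1]$-valued one has $\ETA{\al(x,Tx)}\geq 1/2\geq\eta$, so any $T$ satisfying \eqref{eqn:our-condition-on-T-eta-al} satisfies \eqref{eqn:our-condition-on-T}. (The paper observes this for arbitrary $\al\in\PSI^+$ and $\eta\le 1/2$; your specific witness is unnecessary but harmless.) The implication $(3)\Rightarrow(1)$, however, has two genuine gaps. The first is the one you flag yourself: the existence of a metric $d\in\EQU_\rho$ that restricts to $c|r^n-r^m|$ on $A$ is asserted, not proved, and it is not a routine gluing. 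The prescribed values on $A$ bear no relation to the $\rho$-distances (e.g.\ $d(u_1,u_2)=cr(1-r)$ no matter how large $\rho(u_1,u_2)$ is), and reconciling them with the triangle inequality against points of $X\setminus A$ that are $\rho$-close to the various $u_n$ can force either a failure of the metric axioms or a change of topology near $A$. The second gap is worse because it is a false claim: the mixed case is \emph{not} automatic. For $x\notin A$ and $y=u_n$ you have $Tx=u_1$ and $Ty=u_{n+1}$, so the required conclusion is $d(u_1,u_{n+1})<\al(x,u_n)\,d(x,u_n)$, whose left side equals $c(r-r^{n+1})\approx cr$ and is bounded away from $0$ uniformly in $n$; it therefore forces $d(x,u_n)>c(r-r^{n+1})$. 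The premise $\eta\, d(x,u_1)<d(x,u_n)$ only forces the strictly weaker bound $d(x,u_n)>\frac{\eta}{1+\eta}\,c(r-r^{n})$ (and it already holds once $d(x,u_n)>\frac{\eta}{1-\eta}\,c(r-r^{n})\le c(r-r^n)$). So there is a nonempty window of values of $d(x,u_n)$ in which the hypothesis of \eqref{eqn:our-condition-on-T-eta-al} holds and its conclusion fails; collapsing all of $X\setminus A$ to the single point $u_1$ is too crude.

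The paper avoids both problems. It fixes $r\in(0,\al_0)$ and $\de>0$ with $r<\al(s)$ on $(0,\de)$, and replaces $\rho$ by the single equivalent metric $d=\de\rho/(1+\rho)\in\EQU_\rho$, which is still incomplete and satisfies $d(x,y)<\de$ for \emph{all} pairs, so that $r\,d(x,y)<\al(x,y)\,d(x,y)$ holds globally with no case analysis. It then invokes the mapping constructed in the proof of Suzuki's Theorem~4 on the incomplete space $(X,d)$: that $T$ has no fixed point and satisfies $\eta\, d(x,Tx)<d(x,y)\Rightarrow d(Tx,Ty)\le r\,d(x,y)$. No new distances are prescribed on $A$, so no gluing is needed; and Suzuki's map sends a general point $x$ to a term $u_{\nu(x)}$ whose index is chosen large depending on how close $x$ is to the ``hole'' of the Cauchy sequence, which is exactly the device that makes your problematic mixed case go through. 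If you want to keep your structure, you should either import Suzuki's construction wholesale (as the paper does) or redesign $T$ on $X\setminus A$ along those lines and then actually carry out the metric construction.
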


\begin{proof}
  The implication $\eqref{item:X is complete}\Rightarrow\eqref{item:for-all-al}$
  follows from Theorem \ref{thm:main}. The implication
  $\eqref{item:for-all-al}\Rightarrow\eqref{item:for-some-al-eta}$ is clear
  because, for $\eta\leq 1/2$, condition \eqref{eqn:our-condition-on-T-eta-al}
  implies condition \eqref{eqn:our-condition-on-T}.

  To prove $\eqref{item:for-some-al-eta}\Rightarrow\eqref{item:X is complete}$,
  towards a contradiction, assume that the metric space $(X,\rho)$ is not complete.
  Take a number $r\in(0,\al_0)$ and let $\de$ be a positive number such that
  $r<\al(s)$ for all $s\in(0,\de)$. Define a metric $d$ on $X$ as follows:
  \[
    d(x,y)= \de \frac{\rho(x,y)}{1+\rho(x,y)},
    \qquad (x,y\in X).
  \]

  \noindent
  Then $d\in \EQU_\rho$ and thus $(X,d)$ is not complete.
  The proof of Theorem 4 in \cite{Suzuki-2008} shows that there exists
  a mapping $T:X\to X$ with no fixed point such that
  \[
   \forall\, x, y\in X,\
     \bigprn{\eta d(x,Tx) < d(x,y) \ \Longrightarrow \  d(Tx,Ty) \leq r d(x,y)}.
  \]

  \noindent
  Since $d(x,y)<\de$, we have $r d(x,y) < \al(x,y)d(x,y)$ and thus
  $T$ satisfies \eqref{eqn:our-condition-on-T-eta-al}. This is
  a contradiction.
\end{proof}

%\subsection*{Acknowledgment}
%Many thanks to our \TeX-pert for developing this class file.

% ------------------------------------------------------------------------
\end{document}